\definecolor{dkgreen}{rgb}{0,0.6,0}
\definecolor{gray}{rgb}{0.5,0.5,0.5}
\definecolor{mauve}{rgb}{0.58,0,0.82}
\newcommand*\rel@kern[1]{\kern#1\dimexpr\macc@kerna}
\newcommand*\widebar[1]{%
  \begingroup
  \def\mathaccent##1##2{%
    \rel@kern{0.8}%
    \overline{\rel@kern{-0.8}\macc@nucleus\rel@kern{0.2}}%
    \rel@kern{-0.2}%
  }%
  \macc@depth\@ne
  \let\math@bgroup\@empty \let\math@egroup\macc@set@skewchar
  \mathsurround\z@ \frozen@everymath{\mathgroup\macc@group\relax}%
  \macc@set@skewchar\relax
  \let\mathaccentV\macc@nested@a
  \macc@nested@a\relax111{#1}%
  \endgroup
}
\newtheorem{assumption}{Assumption}
\newtheorem{proposition}{Proposition}
\newtheorem{remark}{Remark}
\newtheorem*{assumption*}{Assumption}
\newcommand{\defeq}{\equiv}
\newcommand{\trans}{\prime}              
\newcommand{\indicator}{\mathbbm{1}}     
\newcommand{\dd}{\mathrm{d}}             
\newcommand{\mfzero}{\boldsymbol{0}}         
\newcommand{\rR}{\mathbb{R}}             
\newcommand{\rN}{\mathbb{N}}             
\newcommand{\op}{o_{\mathrm{P}}}  
\newcommand{\Op}{O_{\mathrm{P}}}       
\newcommand{\filtr}{\mathcal{F}}
\newcommand{\filtrn}{\mathcal{F}^{(T)}}
\newcommand{\lawn}{\mathrm{P}^{(T)}}
\newcommand{\Exp}{\mathbb{E}}
\newcommand{\prob}{\mathrm{Pr}}
\newcommand{\experimentn}{\mathcal{E}^{(T)}}
\newcommand{\stat}{\tau}
\newcommand{\cond}{\,|\,}
\newcommand{\rate}{r}
\newcommand{\Rate}{\mathbf{R}}
\newcommand{\pint}{\theta}
\newcommand{\pintb}{\boldsymbol{\pint}}
\newcommand{\Pintb}{\boldsymbol{\Theta}}
\newcommand{\lpint}{h}
\newcommand{\lpintb}{\boldsymbol{\lpint}}
\newcommand{\LLR}{\Lambda}
\newcommand{\LLRn}{\LLR^{(T)}}
\newcommand{\measure}{\nu}
\newcommand{\freq}{D}
\newcommand{\vS}{\boldsymbol{S}}
\newcommand{\Y}{Y}
\newcommand{\Z}{Z}
\newcommand{\A}{A}
\newcommand{\Zkt}{\Z_{k,t}}
\newcommand{\ZAt}{\Z_{\A_t,t}}
\newcommand{\SK}{[K]}
\newcommand{\Sn}{[T]}
\newcommand{\e}{\varepsilon}
\newcommand{\ekt}{\e_{k,t}}
\newcommand{\f}{f}
\newcommand{\fzk}{\f_{k}}
\newcommand{\CS}{\Delta}
\newcommand{\CSb}{\boldsymbol{\CS}}
\newcommand{\CCSb}{\bm{\mathit{\Delta}}}
\newcommand{\QV}{\mathcal{Q}}
\newcommand{\QVb}{\boldsymbol{\QV}}
\newcommand{\QQVb}{\bm{\mathcal{J}}}
\newcommand{\FJ}{J}
\newcommand{\FJb}{\boldsymbol{\FJ}}
\newcommand{\FJbpintk}{\boldsymbol{\FJ}_{\pintb,k}}
\newcommand{\scorebk}{\dot{\boldsymbol{\ell}}_{\pintb,k}}
\title{Local Asymptotic Normality for Multi-Armed Bandits}
\author[1]{Ramon van den Akker}
\author[2]{Bas J.M.\ Werker}
\author[3]{Bo Zhou}
\affil[1]{Econometrics Group, Tilburg University}
\affil[2]{Econometrics and Finance Group, Tilburg University}
\affil[3]{Department of Economics, Virginia Tech}
\begin{document}

\setlength{\footnotesep}{12pt}

\maketitle


\abstract{
\noindent
\citet*{zhou2025bandit} showed that the limit experiment, in the sense of H\a'{a}jek-Le Cam, for (contextual) bandits whose arms' expected payoffs differ by $O(T^{-1/2})$, is Locally Asymptotically Quadratic (LAQ) but highly non-standard, being characterized by a system of coupled stochastic differential equations. The present paper considers the complementary case where the arms' expected payoffs are fixed with a unique optimal (in the sense of highest expected payoff) arm. It is shown that, under sampling schemes satisfying mild regularity conditions (including UCB and Thompson sampling), the model satisfies the standard Locally Asymptotically Normal (LAN) property.
}

\section{Introduction}
This paper considers the multi-armed bandit problem.
At each time step $t \in \Sn \defeq {1, \dots, T}$, an agent selects one of $K>1$ arms. Each arm $k\in\SK$ generates i.i.d.\ outcomes from an unknown distribution belonging to some parametric family. Let $\Zkt$ denote the $\rR$-valued \textit{potential} outcome of arm $k$ at time $t$. At step $t$, the agent only observes the chosen arm $\A_t$ and its realized outcome $\Y_t = \ZAt$.

We assume that $\Zkt$ follows the law $\mathcal{L}_{\pintb,k}$, where $\pintb\in\Pintb \subset \rR^p$ with $p\in\mathbb{N}$ and $\Pintb$ open. The parameter $\pintb$ indexes all arm distributions, although certain components of $\pintb$ may pertain only to specific arms. 

Throughout, we assume that there is a unique optimal arm (i.e., the arm with the highest expected payoff) and that the distributions $\mathcal{L}_{\pintb,k}$ are Differentiable in Quadratic Mean. We also impose regularity conditions on the adopted sampling policy, which are, for instance, satisfied by the popular Gaussian Thompson sampling and UCB-type policies. The precise assumptions are detailed in Section~\ref{sec:ass}.

Under these conditions, we show that the multi-armed bandit model satisfies the Locally Asymptotically Normal (LAN) property (see, e.g., \cite{van2000asymptotic}).
This stands in sharp contrast to the case where the arms' means are (only) 
$O(T^{-1/2})$ apart. For that setting (studied in, among others, \cite{kuang2024weak} and \cite{fan2025diffusion}),
\citet*{zhou2025bandit} demonstrated that the limit experiment, in the sense of H\a'{a}jek-Le Cam, is Locally Asymptotically Quadratic (LAQ) and highly non-standard, being characterized by a system of coupled stochastic differential equations.

The LAN property provides a classical asymptotic framework for analyzing efficiency bounds of estimators and tests and for developing optimal inference procedures. Nevertheless, a small Monte Carlo experiment (see Section~\ref{sec:MC}) suggests that, for moderate sample sizes and realistic parameter values, the asymptotic approximations of \citet*{zhou2025bandit} are often more accurate approximations to finite-sample behavior. This leads us to warn practitioners that relying on classical asymptotic distributional theory may be misleading in the settings studied in this paper.

The remainder of this paper is organized as follows. Section~\ref{sec:ass} gathers and discusses all assumptions on the arms' distributions and on the sampling strategy used. Our main convergence result is stated and proved in Section~\ref{sec:limitexperiment}. The results on our Monte Carlo experiment are provided in Section~\ref{sec:MC}.

\section{Assumptions}\label{sec:ass}
We assume that each arm's reward distribution $\mathcal{L}_{\pintb,k}$ admits a density $f_{k}(\cdot \cond \pintb)$, with respect to a common $\sigma$-finite dominating measure $\measure$. Furthermore, we impose the \textit{Differentiable in Quadratic Mean (DQM)} condition on these densities, along with a \textit{unique optimal arm} condition as described below.

\begin{assumption}\label{assm:DQM}
Let $\pintb \in \Pintb$ and $k \in \SK$. We assume the following conditions on the arm distributions.
\begin{itemize}
\item[(a)] The densities $\fzk$ are strictly positive and differentiable in quadratic mean at $\pintb$, that is,
\begin{align*}
\frac{\sqrt{\fzk(\Z_k \cond \pintb + \boldsymbol{\omega})}}{\sqrt{\fzk(\Z_k \cond \pintb)}} 
= 1 + \frac{1}{2}\left(\scorebk(\Z_k)^\trans \boldsymbol{\omega} + r_k(\Z_k \cond \boldsymbol{\omega})\right),
\end{align*}
for all $\boldsymbol{\omega} $ with $\pintb + \boldsymbol{\omega} \in \Pintb$, where $\scorebk(\cdot)$ is the $p$-dimensional score for arm $k$ satisfying $\Exp_{\pintb}\big[|\scorebk(\Z_{k})|^2\big] \in (0,\infty)$, and with $\Exp_{\pintb}\big[r_k^2(\Z_{k} \cond \boldsymbol{\omega})\big] = o(|\boldsymbol{\omega}|^2)$.
\item[(b)]
If the $j$-th component of the $p$-vector $\scorebk(Z_{k})$ is equal to $\mfzero$ a.s., then the
mapping $u\mapsto f_k(\cdot |\pintb(u))$ with 
$\pintb(u)=(\pint_1,\dots,\pint_{j-1}, u,\pint_{j+1},\dots,\pint_p)$,
is constant on an interval around $\pint_j$.
\item[(c)] Let $\mu_k(\pintb) \defeq \Exp_{\pintb}\big[\Z_{k}\big]$. There exists a unique $k^* = k^*_{\pintb} \in \SK$ such that
$ \mu_{k^*}(\pintb) > \max_{k \neq k^*} \mu_k(\pintb).
$
\end{itemize}
\end{assumption}

\begin{remark}
Assumption~\ref{assm:DQM}(a) implies $\Exp_{\pintb}\big[\scorebk(\Z_{k})\big] = \mfzero$ and existence of the $p \times p$ Fisher information matrix $\FJb_{\pintb,k} \defeq \Exp_{\pintb}\big[\scorebk(\Z_{k})\scorebk(\Z_{k})^\trans\big]$; see \citet[Theorem 7.2]{van2000asymptotic}. We do not require $\FJbpintk$ to be positive definite, since certain components of $\pintb$ may appear exclusively in specific arms; a situation formalized in Assumption~\ref{assm:DQM}(b). For instance, consider location models of the form $Z_{k,t}=\mu_k + \varepsilon_{k,t}$, where $\varepsilon_{k,t}$ are i.i.d.\ over $t$ with mean zero and fully known distribution. In this case, $p=K$ and $\pintb=(\mu_1,\dots,\mu_K)$. Note that for this simple example, Assumption~\ref{assm:DQM}(b) is indeed satisfied, and the $p\times p$ Fisher information matrices $\FJbpintk$ only have a nonzero element in the $(k,k)$-th position.
\end{remark}

The agent is allowed to update her sampling strategy at each time $t$ according to all the information available at that time. Formally, we define the filtration $(\filtr_t)_{t \geq 1}$ through
\begin{align*}
\filtr_t \defeq \sigma\left((\A_{s},\Y_{s}): s = 1,\dots,{t}\right),
\end{align*}
which collects the historical data on actions and rewards up to and including time $t$. The agent chooses the $(t+1)$-th action $A_{t+1}$ via a draw from a multinomial distribution, with probabilities denoted by $\pi_{t+1}$, conditional on $\filtr_t$. We impose the following on the sampling strategy. Recall that $k^*$ denotes the (unique) optimal arm.

\begin{assumption}\label{assm:sampling_scheme}
Let $\freq_{k,T} = \sum_{s=1}^T \indicator_{\{A_s = k\}}$ be the number of arm-$k$ pulls up to time $t$. For all $\pintb \in \Pintb$, we assume $(a)$ and either $(b)$ or $(b^*)$ below.
\begin{itemize}
\item[(a)] For all $t = 1, \ldots, T - 1$, the conditional sampling probabilities
\begin{align*}
\pi_{t+1}(k) 
\defeq \prob(A_{t+1} = k \cond  \filtr_t),
\quad k\in[K],
\end{align*}
do not depend on $\pintb$.
\item[(b)]
As $T\to\infty$ we have, for $k\neq k^*$,
\[
\frac{\freq_{k,T}}{\log T} \to C_k(\pintb) \in (0,\infty), ~~ a.s.
\]
\item[($b^*$)]
As $T\to\infty$ we have, for $k=1,\dots,K$,
\[
\frac{\freq_{k,T}}{ T} \to C_k(\pintb) \in (0, 1), ~~ a.s.
\]
\end{itemize}
\end{assumption}
\smallskip
\begin{remark} \label{remark:convergent_algorithms}
Note that Assumption~\ref{assm:sampling_scheme}(b) implies 
$ \freq_{k^*,T} / T \to C_{k^*}(\pintb)\defeq 1$ almost surely.
Define $\Delta_k=\Delta_k(\pintb) \defeq \mu_{k^*}(\pintb) - \mu_k(\pintb)$. 
Assumption~\ref{assm:sampling_scheme}(a)-(b)
is satisfied by, for example, the popular Gaussian Thompson sampling in \cite{thompson1933likelihood} and UCB1 proposed in \cite{auer2002finite}, imposing a known reward variance equal to $\sigma^2$, with $C_k(\pintb) = 2\sigma^2/ \Delta_k^2$; see \cite{fan2022typical}. The rate $\log(T)$ in Assumption~\ref{assm:sampling_scheme}(b) is commonly found as the rate with which suboptimal arms are pulled. Our results below can be easily adapted to other rates for the suboptimal arms, as long as they are $o(T)$.
\end{remark}
\begin{remark}
Randomized controlled trials (RCTs) are an example for which 
Assumption~\ref{assm:sampling_scheme}(a)-$(b^*)$ is trivially met. Adaptive sampling examples can arise, for example, by `clipping' a sampling scheme, i.e. by imposing 
$\pi_{t+1}(k \cond \filtr_t)\in [ \epsilon, 1 - \epsilon]$  for some $\epsilon>0$.
\end{remark}




\section{Local Asymptotic Normality} \label{sec:limitexperiment}
We establish Local Asymptotic Normality of the bandit experiment by, in Section~\ref{sec:LAQ}, establishing a quadratic expansion for the likelihood ratios and, subsequently, in Section~\ref{sec:LAN}, establishing asymptotic normality of that expansion.

\subsection{Quadratic expansion of likelihood ratios}\label{sec:LAQ}

Impose Assumptions \ref{assm:DQM}-\ref{assm:sampling_scheme}. 
Let $\pintb\in\Pintb$.
Set, for $j=1,\dots,p$,
\[
\rate_{j,T} = \rate_{j,T}(\pintb)=  
\begin{cases}
\sqrt{T}, &  \text{if } \FJb_{\pintb,k^*}[j,j] > 0; \\
\sqrt{s_T},  &   \text{else},
\end{cases}   
\]
where $s_T=\log(T)$ in case of Assumption~\ref{assm:sampling_scheme}(b) 
and $s_T=T$ in case of Assumption~\ref{assm:sampling_scheme}$(b^*)$.
Let $\Rate_T$ denote the diagonal $p \times p$ matrix with entries $\Rate_{T}[j,j] = \rate_{j,T}$. Then we
localize the parameter of interest at $\pintb$, with $\lpintb = (\lpint_1,\dots,\lpint_p)^\trans \in \mathbb{R}^p$, using
\begin{equation}
\pintb_T =  \pintb + \Rate_T^{-1} \lpintb.
\end{equation}
As $\Pintb$ is open we have $\pintb_T\in\Pintb$ for $T$ large.

Let $\lawn_{\pintb,\lpintb}$ denote the law of $(\A_{1},\Y_{1},\dots,\A_T,\Y_T)$ generated by the aforementioned stochastic $K$-armed bandit problem with parameter $\pintb_T$. Formally, we define the localized sequence of experiments as
\begin{align*}
\experimentn_{\pintb} \defeq \left(\Omega^{(T)}, \mathcal{F}^{(T)}, \left(\lawn_{\pintb,\lpintb} : \lpintb\in\rR^{p}\right)\right), ~~~ T\in\rN,
\end{align*}
where $\Omega^{(T)} = \left(\SK\otimes\rR\right)^T$ and $\filtrn = \mathcal{B}\left(\Omega^{(T)}\right)$, the Borel $\sigma$-field.

Using Assumption~\ref{assm:DQM} and  Assumption~\ref{assm:sampling_scheme}(a),  the log-likelihood ratio equals
\begin{equation*}
\begin{aligned}
    \log\frac{\dd\lawn_{\pintb,\lpintb}}{\dd\lawn_{\pintb,\mfzero}} 
&= \log\frac{\prod_{t=1}^T \pi_t(\A_t\cond\filtr_{t-1})\f_{A_t}(\Y_t\cond\pintb_T)}{\prod_{t=1}^T \pi_t(\A_t\cond\filtr_{t-1})\f_{A_t}(\Y_t\cond\pintb)}  
= \sum_{t=1}^T \log\frac{\f_{A_t}(\Y_t\cond\pintb_T)}{\f_{A_t}(\Y_t\cond\pintb)}  \\
&=
\sum_{k=1}^{K}\sum_{t=1}^T \indicator_{\{A_t = k\}}\log\frac{\fzk(\Z_{k,t}\cond \pintb + \Rate_T^{-1} \lpintb)}{\fzk(\Z_{k,t }\cond\pintb)} .
\end{aligned}
\end{equation*}
Note that the rate at which inference on a component of $\pintb$ is possible, is determined by the fastest rate among all arms whose reward distribution depends on that component. To make things precise, let $a_{k,T}=a_{k,T}(\pintb)\defeq \sqrt{s_T}$ for $k\neq k^*$ and $a_{k^*,T} =a_{k^*,T}(\pintb)\defeq \sqrt{T}$.
For $u\in\mathbb{R}^p$ and $k\in\SK$, introduce,
\begin{align*}
\Lambda_{\pintb,k}^{(T)}(u)  
 = \sum_{t=1}^T \indicator_{\{A_t = k\}}\log\frac{\fzk(\Zkt\cond \pintb + u / a_{k,T})}{\fzk(\Z_{k,t }\cond\pintb)}.
\end{align*}
We then notice that
\begin{align*}
    \log\frac{\dd\lawn_{\pintb,\lpintb}}{\dd\lawn_{\pintb,\mfzero}} 
    &=
    \sum_{k=1}^{K}\LLRn_{\pintb,k}( a_{k,T}\Rate_T^{-1}\lpintb). 
\end{align*}

\begin{proposition}\label{prop:likelihood_expansion}
Let Assumptions \ref{assm:DQM}--\ref{assm:sampling_scheme} hold and $\pintb\in\Pintb$.
And let $u_T$ be a bounded sequence in $\mathbb{R}^p$.
Under $\lawn_{\pintb,\mfzero}$, we have, for $k\in\SK$, the decomposition
\begin{align} \label{eqn:loglikelihoodratio_sequence}
\LLRn_{\pintb,k}(u_T)  
= u_T^\trans\CSb_{k,T} - \frac{1}{2}u_T^\trans\QVb_{k,T}u_T + \op(1), 
\end{align}
where
\begin{align*}
\CSb_{k,T} &\defeq \frac{1}{a_{k,T}(\pintb)} \sum_{t=1}^{T}\indicator_{\{A_{t} = k\}}\scorebk(\Y_{t}), \\
\QVb_{k,T} &\defeq \frac{1}{a_{k,T}^2(\pintb)} \sum_{t=1}^{T}\indicator_{\{A_{t} = k\}}\FJbpintk. 
\end{align*}
\end{proposition}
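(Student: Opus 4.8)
\emph{Proof proposal.} The plan is to reduce $\LLRn_{\pintb,k}(u_T)$ to a classical Le Cam expansion of an i.i.d.\ log-likelihood ratio under differentiability in quadratic mean (cf.\ \citet[Theorem~7.2]{van2000asymptotic}), the two new features being that the relevant i.i.d.\ sequence is observed only along a random, data-dependent index set and that Assumption~\ref{assm:DQM} supplies merely a second moment of the score. Write $\omega_T\defeq u_T/a_{k,T}(\pintb)$, which tends to $\mfzero$ (for $T$ large $\pintb+\omega_T\in\Pintb$). Because the potential outcomes $(\Z_{k,t})_t$ are i.i.d., independent across arms and of the randomization, and because, by Assumption~\ref{assm:sampling_scheme}(a), $A_{t+1}$ never peeks at $\Z_{k,t+1}$, a standard argument---list the rounds with $A_t=k$ as $T_1^k<T_2^k<\dots$ and set $\widetilde Z^{(i)}\defeq\Z_{k,T_i^k}$---shows that, under $\lawn_{\pintb,\mfzero}$, the $\widetilde Z^{(i)}$ are i.i.d.\ with density $\fzk(\cdot\cond\pintb)$, that $\freq_{k,T}=\#\{t\le T:A_t=k\}$ is a stopping time for a filtration $(\widetilde\mathcal{G}_i)$ to which $(\widetilde Z^{(i)})$ is adapted, and that $\widetilde Z^{(i)}$ is independent of $\widetilde\mathcal{G}_{i-1}$. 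Consequently $\LLRn_{\pintb,k}(u_T)=\sum_{i=1}^{\freq_{k,T}}2\log(1+\widetilde W_i)$, where, by Assumption~\ref{assm:DQM}(a), $\widetilde W_i\defeq\bigl(\fzk(\widetilde Z^{(i)}\cond\pintb+\omega_T)/\fzk(\widetilde Z^{(i)}\cond\pintb)\bigr)^{1/2}-1=\tfrac12\bigl(\scorebk(\widetilde Z^{(i)})^\trans\omega_T+\widetilde r_i\bigr)$ with $\widetilde r_i\defeq r_k(\widetilde Z^{(i)}\cond\omega_T)$.

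Next I would, for each fixed $n$, use $2\log(1+x)=2x-x^2+R(x)$, where $R'(x)=2x^2/(1+x)$ so that $|R(x)|\le\tfrac43|x|^3$ for $|x|\le\tfrac12$, to obtain
\[
\sum_{i\le n}2\log(1+\widetilde W_i)=\omega_T^\trans\sum_{i\le n}\scorebk(\widetilde Z^{(i)})+\sum_{i\le n}\widetilde r_i-\sum_{i\le n}\widetilde W_i^2+\sum_{i\le n}R(\widetilde W_i),
\]
and then replace $\sum\widetilde r_i$ and $\sum\widetilde W_i^2$ by $n$ times their means. From $\Exp_\pintb[\scorebk]=\mfzero$ and the Hellinger-affinity identity $\int\sqrt{\fzk(\cdot\cond\pintb+\omega_T)\fzk(\cdot\cond\pintb)}\,\dd\measure=1-\tfrac12\Exp_\pintb[\widetilde W_1^2]$ one gets $\Exp_\pintb[\widetilde W_1^2]=\tfrac14\omega_T^\trans\FJbpintk\omega_T+o(|\omega_T|^2)$ and $\Exp_\pintb[\widetilde r_1]=-\Exp_\pintb[\widetilde W_1^2]$, whence $n(\Exp_\pintb[\widetilde r_1]-\Exp_\pintb[\widetilde W_1^2])=-\tfrac12\,n\,\omega_T^\trans\FJbpintk\omega_T+o(n|\omega_T|^2)$. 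Since $a_{k,T}^2(\pintb)$ equals the deterministic $s_T$ (for $k\ne k^*$) or $T$ (for $k=k^*$), Assumption~\ref{assm:sampling_scheme} yields $\freq_{k,T}/a_{k,T}^2\to C_k(\pintb)\in(0,\infty)$ a.s.; fixing $C>C_k(\pintb)$ and setting $n_T\defeq\lceil C\,a_{k,T}^2\rceil$, we have $\lawn_{\pintb,\mfzero}(\freq_{k,T}\le n_T)\to1$ and $n_T/a_{k,T}^2\to C$. It therefore remains to establish that, under $\lawn_{\pintb,\mfzero}$,
\[
\sup_{n\le n_T}\Bigl|\,\sum_{i\le n}(\widetilde r_i-\Exp_\pintb[\widetilde r_1])-\sum_{i\le n}(\widetilde W_i^2-\Exp_\pintb[\widetilde W_1^2])+\sum_{i\le n}R(\widetilde W_i)\,\Bigr|=\op(1);
\]
evaluating the decomposition at $n=\freq_{k,T}$ on $\{\freq_{k,T}\le n_T\}$ then gives \eqref{eqn:loglikelihoodratio_sequence}, because $\omega_T^\trans\sum_{i\le\freq_{k,T}}\scorebk(\widetilde Z^{(i)})=u_T^\trans\CSb_{k,T}$ identically (as $\Y_t=\widetilde Z^{(i)}$ at the $i$-th pull of arm $k$) and $\freq_{k,T}(\Exp_\pintb[\widetilde r_1]-\Exp_\pintb[\widetilde W_1^2])=-\tfrac12 u_T^\trans\QVb_{k,T}u_T+\op(1)$ (using $\QVb_{k,T}=\freq_{k,T}a_{k,T}^{-2}\FJbpintk$ and $\freq_{k,T}a_{k,T}^{-2}\to C_k(\pintb)$).

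The three terms in the last supremum, all over the \emph{deterministic} range $n\le n_T$, I would handle as follows. (i)~By Kolmogorov's maximal inequality and $\Exp_\pintb[\widetilde r_1^2]=o(|\omega_T|^2)=o(a_{k,T}^{-2})$, the centred $\widetilde r$-term is $\op(1)$. (ii)~On $\{\max_{i\le n_T}|\widetilde W_i|\le\tfrac12\}$ one has $\sup_{n\le n_T}|\sum_{i\le n}R(\widetilde W_i)|\le\tfrac43\bigl(\max_{i\le n_T}|\widetilde W_i|\bigr)\sum_{i\le n_T}\widetilde W_i^2$, where $\sum_{i\le n_T}\widetilde W_i^2=\Op(1)$ (its mean is $n_T\Exp_\pintb[\widetilde W_1^2]=O(1)$) and $\max_{i\le n_T}|\widetilde W_i|\pto0$: one bounds $\lawn_{\pintb,\mfzero}(\max_{i\le n_T}|\widetilde W_i|>\eta)\le n_T\bigl(\law_\pintb(|\scorebk^\trans\omega_T|>\eta)+\law_\pintb(|\widetilde r_1|>\eta)\bigr)$, the second summand being $o(1)$, and for the first writes $\scorebk^\trans\omega_T=|\omega_T|\,\hat\omega_T^\trans\scorebk$, uses $\indicator_{\{X^2>c\}}\le c^{-1}X^2\indicator_{\{X^2>c\}}$ with $c=\eta^2/|\omega_T|^2\to\infty$, and invokes uniform integrability of $\{(\hat\omega_T^\trans\scorebk)^2\}_T$ (dominated by $|\scorebk|^2$, which is $\law_\pintb$-integrable by Assumption~\ref{assm:DQM}(a)). (iii)~The same truncation/uniform-integrability device yields the Lindeberg-type bound $a_{k,T}^2\,\Exp_\pintb[\widetilde W_1^2\indicator_{\{|\widetilde W_1|>\e\}}]\to0$ for each fixed $\e>0$; truncating the i.i.d., mean-zero increments of $\sum_{i\le n}(\widetilde W_i^2-\Exp_\pintb[\widetilde W_1^2])$ at level $\e$, Kolmogorov's inequality bounds the truncated part's maximum over $n\le n_T$ by tail probabilities of order $\e^2$ uniformly in $T$, while the discarded part is $\op(1)$ for each fixed $\e$; letting $\e\downarrow0$ concludes.

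I expect the main obstacle to be part (iii), together with the $\max_{i\le n_T}|\widetilde W_i|\pto0$ step in (ii): since Assumption~\ref{assm:DQM} grants only a second moment of the score, the crude bound $\var_\pintb(\widetilde W_1^2)\le\Exp_\pintb[\widetilde W_1^4]$ is unavailable, which forces the truncation/uniform-integrability route---the familiar subtlety of the classical DQM proof of the LAN expansion, here compounded by the random, data-dependent horizon $\freq_{k,T}$, which is precisely what dictates the uniform-in-$n$ (maximal-inequality) formulation together with the preliminary stopping-time reduction to an i.i.d.\ sequence.
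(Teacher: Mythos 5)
Your argument is correct, but it takes a genuinely different route from the paper. The paper does not reduce to an i.i.d.\ subsequence at all: it keeps the time index $t=1,\dots,T$, works with the summands $\vS_{Tt}=a_{k,T}^{-1}\indicator_{\{A_t=k\}}\scorebk(\Zkt)$ and the enlarged filtration $\filtr_{T,t-1}=\sigma(\filtr_{t-1},A_t)$, and simply verifies the four conditions of the general martingale quadratic-expansion result in \citet[Proposition~1]{hallin2015quadratic} (conditional centering, $\sum_t\Exp[\vS_{Tt}\vS_{Tt}^\trans\cond\filtr_{T,t-1}]=\FJbpintk\,\freq_{k,T}/a_{k,T}^2=\Op(1)$, a conditional Lindeberg condition, $\sum_t\Exp[R_{Tt}^2\cond\filtr_{T,t-1}]=\op(1)$, and the unbiasedness of the one-step likelihood ratios). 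That reference packages exactly the truncation/uniform-integrability work you identify as the crux---the control of $\sum_i(\widetilde W_i^2-\Exp\widetilde W_1^2)$ and of $\max_i|\widetilde W_i|$ with only second moments of the score available---so the paper's proof is short, and the adaptive sampling is absorbed by conditioning on $A_t$ rather than by an optional-skipping reduction. Your proof instead re-derives the classical Le Cam expansion from scratch: the Doob optional-skipping step (legitimate here thanks to Assumption~\ref{assm:sampling_scheme}(a) and the i.i.d.\ potential outcomes) turns the arm-$k$ observations into an i.i.d.\ sequence of random length $\freq_{k,T}$, and you handle the random horizon by bounding $\freq_{k,T}$ by a deterministic $n_T$ with probability tending to one and proving uniform-in-$n$ ($n\le n_T$) negligibility via Kolmogorov's maximal inequality plus truncation. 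All the individual steps check out (the Hellinger-affinity identity giving $\Exp_\pintb[\widetilde r_1]=-\Exp_\pintb[\widetilde W_1^2]$, the bound $|R(x)|\le\tfrac43|x|^3$, and the $o(1)$ rates all match $n_T\asymp a_{k,T}^2\asymp|\omega_T|^{-2}$). What you lose relative to the paper is brevity and the cleaner martingale formulation (which would survive without the i.i.d./independence structure); what you gain is a self-contained proof that does not lean on the external lemma.
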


\begin{proof}[Proof of Proposition~\ref{prop:likelihood_expansion}]
We follow \citet[Proposition 1]{hallin2015quadratic} to prove the  expansion. To put notions in their language, we let $P_T = \lawn_{\mfzero}$,  define
\begin{align*}
\vS_{T t} =
 \frac{1}{a_{k,T}}\indicator_{\{A_t = k\}}\scorebk(Y_t)
=\frac{1}{a_{k,T}}\indicator_{\{A_t = k\}}\scorebk(\Zkt),
\end{align*}
for $t = 1,\dots,T$, and write the individual likelihood ratio of observation $t$ as
\begin{align*}
LR_{T t} = 1 + \indicator_{\{A_t = k\}}\left(\frac{\fzk(\Zkt\cond\pintb_T)}{\fzk(\Zkt\cond\pintb)}-1\right).
\end{align*}
By the DQM condition in Assumption~\ref{assm:DQM}, we can decompose 
\begin{align*}
\sqrt{LR_{T t}} = 1 + \frac{1}{2}u_T^\trans\vS_{T t} + \frac{1}{2}R_{T t},
\end{align*}
where $R_{T t} = \indicator_{\{A_t = k\}}\rate_k\big(\Zkt\cond  u_T/a_{k,T}\big)$.

We verify the four conditions in \citet[Proposition 1]{hallin2015quadratic} using, in their notation, the filtration defined by $\filtr_{T,t-1} = \sigma(\filtr_{t-1}, A_t)$.

Their \textit{Condition (a)} is trivially met by assumption. 

For their \textit{Condition (b)}, note 
\begin{align*}
\Exp_{P_T}\left[\vS_{T t}\cond \mathcal{F}_{T,t-1} \right] 
&= \frac{1}{a_{k,T}}\indicator_{\{\A_t = k\}}\,\Exp_{P_T}\big[\scorebk(\Zkt)\cond\A_t,\filtr_{t-1}\big]  
= \frac{1}{a_{k,T}} \indicator_{\{\A_t = k\}}\,\Exp_{P_T}\big[\scorebk(\Zkt)\big] 
= \mfzero,
\end{align*}
which yields their Display~(2). For $J_{T}$ in their Display~(3), under Assumption~\ref{assm:sampling_scheme}, we have 
\begin{align*} 
&  \sum_{t=1}^T \Exp_{P_T}\left[ \vS_{T t}\vS_{T t}^\trans \cond \filtr_{T,t-1}  \right]  
= \sum_{t=1}^T\indicator_{\{A_t = k\}}\Exp_{P_T}\left[ \vS_{T t}\vS_{T t}^\trans \cond \A_t,\filtr_{t-1} \right]  \\
&\qquad =\frac{1}{a_{k,T}^2}\sum_{t=1}^T \indicator_{\{A_t = k\}}\Exp_{P_T}\left[\scorebk(\Zkt)\scorebk(\Zkt)^\trans\right]  
= \FJbpintk \frac{\freq_{k,T}}{a_{k,T}^2}
= \Op(1).
\end{align*}
The conditional Lindeberg condition follows as, for any $\delta > 0$, 
\begin{align*}
&~ \sum_{t=1}^T \,\Exp_{P_T}\left[\big| u_T^\trans\vS_{T t}\big|^2\indicator_{\{|u_T^\trans\vS_{T t}| > \delta\}} \cond \filtr_{T,t-1}\right]  
= \frac{1}{a^2_{k,T}}\sum_{t=1}^T 
\indicator_{\{ A_t=k\}} \Exp_{P_T}\left[\big|u_T^\trans\scorebk(\Zkt)\big|^2\indicator_{\{|u_T^\trans\vS_{T t}| > \delta\}}\right]  \\
&\qquad = \frac{\freq_{k,T}}{ a^2_{k,T}}\times\Exp\left[\big|u_T^\trans\scorebk(Z_{k,1})\big|^2\indicator_{\{|u_T^\trans\scorebk(Z_{k,1})| > a_{k,T}\delta\}}\right] =O_P(1)\times o(1) =o_P(1).
\end{align*}

For their \textit{Condition~(c)}, note 
\begin{equation} \label{eqn:proof_condition(c)}
\begin{aligned}
& \sum_{t=1}^T \Exp_{P_T}\left[ R_{T t}^2 \cond \filtr_{T,t-1} \right] 
= \sum_{t=1}^T 1_{\{ A_t=k\}} \Exp_{P_T}\left[r_k^2\big(\Zkt\cond u_T /  a_{k,T} \big)\right] \\
&\qquad = \frac{\freq_{k,T}}{a_{k,T}^2} \times a_{k,T}^2  \Exp\left[r_k^2\big(Z_{k,1}\cond u_T/a_{k,T} \big)\right] = O_P(1) \times a_{k,T}^2 \times o(1/a_{k,T}^2) = o_P(1).
\end{aligned}
\end{equation}

Their Display (5) is satisfied as 
\begin{align*}
 &~ \sum_{t=1}^T (1- \Exp_{P_T}[LR_{T t}\cond\mathcal{F}_{T,t-1}]) = \sum_{t=1}^T -\Exp_{P_T}\left[\indicator_{\{A_t = k\}}\left(\frac{\fzk(\Zkt\cond\pintb_T)}{\fzk(\Zkt \cond \pintb)}-1\right)\cond \mathcal{F}_{T,t-1} \right]  \\
&\qquad = \sum_{t=1}^T - \indicator_{\{A_t=k\}} \Exp_{P_T}\left[\frac{\fzk(\Zkt\cond\pintb_T)}{\fzk(\Zkt\cond\pintb)}-1\right]  = 0,
\end{align*}
where the second equality follows the same arguments as (\ref{eqn:proof_condition(c)}). The last equality is automatic as the densities $\fzk$ are strictly positive.

Finally, their \textit{Condition~(d)} is naturally true under our setting. 
\end{proof}

\subsection{Local Asymptotic Normality}\label{sec:LAN}

The quadratic likelihood ratio expansion for each arm $k$ separately in Proposition~\ref{prop:likelihood_expansion} forms the basis of our LAN result for the bandit problem. Below, we combine the expansion for all arms and establish asymptotic normality.

To be precise, we introduce the $p$-dimensional \emph{central sequence}
\begin{equation*}
\CCSb_{T}[j] = 
\begin{cases}
\sum_{k=1}^K \CSb_{k,T}[j],  & \text{ in case of Assumption~\ref{assm:sampling_scheme}$(b^*)$; } \\
\CSb_{k^*,T}[j]  + \indicator_{\left\{\FJb_{\pintb,k^*}[j,j] = 0\right\}} \sum_{k\neq k^*} \CSb_{k,T}[j],  & 
\text{ in case of Assumption~\ref{assm:sampling_scheme}(b),}
\end{cases}    
\end{equation*}
for $j=1,\dots,p$, and the associated $p\times p$ \emph{Fisher-information} matrix
\begin{equation*}
\QQVb[\ell,m] = 
\begin{cases}
\sum_{k=1}^K  C_k(\pintb) \FJb_{\pintb,k}[\ell,m],  & \text{in case of Assumption~\ref{assm:sampling_scheme}$(b^*)$;} \\
 \FJb_{\pintb,k^*}[\ell,m]  + \indicator_{\left\{ \FJb_{\pintb,k^*}[\ell,m]=0\right\}} \sum_{k\neq k^*}  \FJb_{\pintb,k}[\ell,m],  & 
\text{ in case of Assumption~\ref{assm:sampling_scheme}(b),}
\end{cases}    
\end{equation*}
for $\ell, m = 1,\dots,p$.

\begin{proposition} \label{prop:LAN}
Let $\pintb\in\Pintb$ and Assumptions~\ref{assm:DQM}-\ref{assm:sampling_scheme} hold. Then we have,  under $\lawn_{\pintb,\mfzero}$,
\begin{align}
\QVb_{k,T} \stackrel{p}{\to} C_k(\pintb) \FJb_{\pintb,k}, 
\quad \text{for } k \in \SK, \label{eqn:convFI}
\end{align}
and 
\begin{align}
\begin{pmatrix}
\CSb_{1,T} \\
\vdots \\
\CSb_{K,T}
\end{pmatrix} 
\stackrel{d}{\to} 
\mathcal{N}\!\left(
\mfzero, 
\begin{pmatrix}
C_1(\pintb) \FJb_{\pintb,1} & \dots & 0 \\
\vdots & \ddots & \vdots \\
0 & \dots & C_K(\pintb) \FJb_{\pintb,K}
\end{pmatrix}
\right).  \label{eqn:convCS}
\end{align}
Moreover, for $\lpintb\in\mathbb{R}^p$ and still under $\lawn_{\pintb,\mfzero}$, we have
\begin{equation}\label{eqn:LAN}
\log\frac{\dd\lawn_{\pintb,\lpintb}}{\dd\lawn_{\pintb,\mfzero}} =
\lpintb^\prime\CCSb_{T} - \frac{1}{2}\lpintb^\prime \QQVb\lpintb + o_P(1).
\end{equation}
\end{proposition}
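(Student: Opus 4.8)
The plan is to establish the three displays of Proposition~\ref{prop:LAN} in turn. Display~\eqref{eqn:convFI} is immediate: since $\QVb_{k,T}=(\freq_{k,T}/a_{k,T}^2)\,\FJbpintk$, Assumption~\ref{assm:sampling_scheme} gives $\freq_{k,T}/a_{k,T}^2\to C_k(\pintb)$ a.s.\ --- under~(b) because $\freq_{k,T}/\log T\to C_k(\pintb)$ for $k\neq k^*$ while $\freq_{k^*,T}/T\to 1=C_{k^*}(\pintb)$ (Remark~\ref{remark:convergent_algorithms}), and under~$(b^*)$ because then $s_T=T$ and $\freq_{k,T}/T\to C_k(\pintb)$ for every $k$ --- so $\QVb_{k,T}\to C_k(\pintb)\FJbpintk$ a.s., hence in probability.

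For~\eqref{eqn:convCS} I would invoke the Cram\'{e}r--Wold device together with a scalar martingale central limit theorem, applied to the triangular array of $\rR^{Kp}$-valued vectors $\boldsymbol\xi_{T,t}$, $t=1,\dots,T$, whose $k$-th block is $a_{k,T}^{-1}\indicator_{\{A_t=k\}}\scorebk(\Y_t)$, relative to the filtration $\filtr_{T,t-1}\defeq\sigma(\filtr_{t-1},A_t)$ already used in the proof of Proposition~\ref{prop:likelihood_expansion}. Each $\boldsymbol\xi_{T,t}$ is $\filtr_{t}$-measurable, and --- as in that proof, because conditionally on $(\filtr_{t-1},A_t)$ the realized potential outcome is drawn from the law indexed by $\pintb$ with mean-zero score --- we have $\Exp[\boldsymbol\xi_{T,t}\mid\filtr_{T,t-1}]=\mfzero$ (all expectations under $\lawn_{\pintb,\mfzero}$). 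The predictable quadratic variation $\sum_t\Exp[\boldsymbol\xi_{T,t}\boldsymbol\xi_{T,t}^\trans\mid\filtr_{T,t-1}]$ is block-diagonal, since $\indicator_{\{A_t=k\}}\indicator_{\{A_t=\ell\}}=0$ for $k\neq\ell$, with $k$-th diagonal block equal to $\QVb_{k,T}\to C_k(\pintb)\FJbpintk$ a.s.\ by the previous paragraph; and the conditional Lindeberg condition is exactly the estimate already made in the proof of Proposition~\ref{prop:likelihood_expansion}, $\sum_t\Exp[|\boldsymbol\xi_{T,t}|^2\indicator_{\{|\boldsymbol\xi_{T,t}|>\delta\}}\mid\filtr_{T,t-1}]=\sum_k(\freq_{k,T}/a_{k,T}^2)\,\Exp[|\scorebk(\Z_{k,1})|^2\indicator_{\{|\scorebk(\Z_{k,1})|>a_{k,T}\delta\}}]=\Op(1)\cdot o(1)=\op(1)$. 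The martingale CLT then yields~\eqref{eqn:convCS} (re-deriving \eqref{eqn:convFI} en route); the limiting law is Gaussian rather than mixed precisely because the $C_k(\pintb)$ are deterministic.

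For~\eqref{eqn:LAN}, start from $\log(\dd\lawn_{\pintb,\lpintb}/\dd\lawn_{\pintb,\mfzero})=\sum_{k=1}^K\LLRn_{\pintb,k}(u_{k,T})$ with $u_{k,T}\defeq a_{k,T}\Rate_T^{-1}\lpintb$, and first replace each $u_{k,T}$ by a \emph{bounded} sequence so that Proposition~\ref{prop:likelihood_expansion} becomes applicable. Let $\bar{u}_{k,T}$ agree with $u_{k,T}$ except that its $j$-th coordinate is set to $0$ whenever $\FJbpintk[j,j]=0$. Then $\bar{u}_{k,T}$ is bounded: its surviving $j$-th coordinate equals $a_{k,T}\lpint_j/\rate_{j,T}$, which is $\lpint_j$ when $k=k^*$ (there $\FJb_{\pintb,k^*}[j,j]>0$ forces $\rate_{j,T}=\sqrt{T}=a_{k^*,T}$) and has modulus at most $|\lpint_j|$ when $k\neq k^*$ (there $a_{k,T}=\sqrt{s_T}\le\rate_{j,T}$). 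Moreover $\LLRn_{\pintb,k}(u_{k,T})=\LLRn_{\pintb,k}(\bar{u}_{k,T})$ for all large $T$: the two arguments, divided by $a_{k,T}$, both tend to $\mfzero$, they differ only along coordinates $j$ with $\FJbpintk[j,j]=0$, and by Assumption~\ref{assm:DQM}(b) the density $\fzk$ is constant in $\pint_j$ on a neighborhood of $\pint_j$ for each such $j$, so the per-observation ratios entering $\LLRn_{\pintb,k}$ are unaffected. Applying Proposition~\ref{prop:likelihood_expansion} to the bounded sequences $\bar{u}_{k,T}$ and summing over the finitely many arms,
\[
\log\frac{\dd\lawn_{\pintb,\lpintb}}{\dd\lawn_{\pintb,\mfzero}}=\sum_{k=1}^K\bar{u}_{k,T}^\trans\CSb_{k,T}-\tfrac12\sum_{k=1}^K\bar{u}_{k,T}^\trans\QVb_{k,T}\bar{u}_{k,T}+\op(1),
\]
and it remains to identify the two sums. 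In the linear part, the zeroed coordinates of $\bar{u}_{k,T}$ multiply components $\CSb_{k,T}[j]$ that vanish identically (their summands $\scorebk(\Z_{k,t})[j]$ being a.s.\ zero), and under~(b) the remaining coordinates of a suboptimal arm with $\FJb_{\pintb,k^*}[j,j]>0$ carry a factor $\bar{u}_{k,T}[j]=\sqrt{s_T/T}\,\lpint_j\to0$ against $\CSb_{k,T}[j]=\Op(1)$ (by \eqref{eqn:convCS}); collecting what survives reproduces $\lpintb^\trans\CCSb_{T}+\op(1)$ in both cases (b) and $(b^*)$, which is exactly the case-split defining $\CCSb_T$. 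In the quadratic part, $\bar{u}_{k,T}$ converges to a deterministic limit and $\QVb_{k,T}\pto C_k(\pintb)\FJbpintk$ by \eqref{eqn:convFI}, so the sum converges in probability to $\sum_k C_k(\pintb)(\lim_T\bar{u}_{k,T})^\trans\FJbpintk(\lim_T\bar{u}_{k,T})$, which --- substituting the explicit $a_{k,T}$, $\rate_{j,T}$ and the value $C_{k^*}(\pintb)=1$ under~(b) --- collapses to $\lpintb^\trans\QQVb\lpintb$. This gives~\eqref{eqn:LAN}.

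I expect the main obstacle to be the reduction to bounded localization vectors in the proof of~\eqref{eqn:LAN}: along the directions in which the optimal arm is ``rate-deficient'' ($\FJb_{\pintb,k^*}[j,j]=0$, so $\rate_{j,T}=\sqrt{s_T}$ is far below $a_{k^*,T}=\sqrt{T}$) the raw vector $u_{k^*,T}$ diverges, and one must use Assumption~\ref{assm:DQM}(b) to see that the $k^*$-summand is nonetheless insensitive to zeroing those coordinates --- and then carry out the companion bookkeeping showing that the suboptimal arms enter only along those same deficient directions, which is what makes the case-split in $\CCSb_T$ and $\QQVb$ come out as stated. By comparison, the block-diagonality of the predictable quadratic variation and the re-use of the Lindeberg estimate from Proposition~\ref{prop:likelihood_expansion} make~\eqref{eqn:convCS} routine, and~\eqref{eqn:convFI} is immediate.
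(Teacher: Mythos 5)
Your proposal is correct and follows the same overall architecture as the paper's proof: expand $\log(\dd\lawn_{\pintb,\lpintb}/\dd\lawn_{\pintb,\mfzero})=\sum_k\LLRn_{\pintb,k}(a_{k,T}\Rate_T^{-1}\lpintb)$, handle the possibly unbounded argument for the optimal arm by zeroing the coordinates $j$ with vanishing Fisher-information diagonal and invoking Assumption~\ref{assm:DQM}(b) together with the a.s.\ vanishing of the corresponding score components (the paper's steps (i)--(iii) with its vector $\tilde{\mathbf{u}}$; you apply the same device uniformly to all arms, which is harmless since the suboptimal arms' arguments are already bounded), then apply Proposition~\ref{prop:likelihood_expansion} and collect terms using $a_{k,T}\Rate_T^{-1}\lpintb\to\bar\lpintb$ for $k\neq k^*$. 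The one genuine difference is in (\ref{eqn:convFI})--(\ref{eqn:convCS}): the paper outsources these to an extension of Theorems~3.1--3.2 of \cite{melfipage2000} plus Slutsky, whereas you prove them directly --- (\ref{eqn:convFI}) from $\QVb_{k,T}=(\freq_{k,T}/a_{k,T}^2)\FJbpintk$ and Assumption~\ref{assm:sampling_scheme}, and (\ref{eqn:convCS}) via Cram\'er--Wold and a martingale CLT for the stacked array with filtration $\sigma(\filtr_{t-1},A_t)$, reusing the Lindeberg estimate from Proposition~\ref{prop:likelihood_expansion}. This makes the argument self-contained at essentially no extra cost, since Melfi--Page's result is itself a martingale CLT for adaptive allocation. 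One further remark: your explicit computation of the limiting quadratic term gives $\lpintb^\trans\FJb_{\pintb,k^*}\lpintb+\sum_{k\neq k^*}C_k(\pintb)\,\bar\lpintb^\trans\FJb_{\pintb,k}\bar\lpintb$ under Assumption~\ref{assm:sampling_scheme}(b), with $\bar\lpintb_j=\lpint_j\indicator_{\{\FJb_{\pintb,k^*}[j,j]=0\}}$; this is indeed what the paper's own proof yields (and is the asymptotic variance of $\lpintb^\trans\CCSb_T$), but it matches the displayed definition of $\QQVb$ in case (b) only after inserting the factors $C_k(\pintb)$ on the suboptimal-arm terms and reading the indicator as $\indicator_{\{\FJb_{\pintb,k^*}[\ell,\ell]=0\}}\indicator_{\{\FJb_{\pintb,k^*}[m,m]=0\}}$ --- so your final ``collapses to $\lpintb^\trans\QQVb\lpintb$'' is right in substance, and your bookkeeping in fact flags a small typo in the stated $\QQVb$ rather than a gap in your argument.
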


\begin{proof} All probabilities are evaluated under $\lawn_{\pintb,\mfzero}$. As the sequences $a_{k,T} \Rate_T^{-1} \lpintb$ are bounded
for $k\neq k^*$, 
Proposition~\ref{prop:likelihood_expansion} yields
\begin{align*}
\sum_{k\neq k^*} \LLRn_{\pintb,k}( a_{k,T}\Rate_T^{-1}\lpintb)
=  \sum_{k \neq k^*} \Bigg(a_{k,T} (\Rate_T^{-1}\lpintb)^\trans   \CSb_{k,T} - \frac{1}{2}a_{k,T}^2 (\Rate_T^{-1}\lpintb)^\trans\QVb_{k,T}  (\Rate_T^{-1}\lpintb) + \op(1)
\Bigg).
\end{align*}
For $k=k^*$ we cannot apply Proposition~\ref{prop:likelihood_expansion} directly: if Assumption~\ref{assm:sampling_scheme}(b) holds, the sequence $a_{k^*,T} \Rate_T^{-1} \lpintb$ might be unbounded (for Assumption~\ref{assm:sampling_scheme}($b^*$) there is actually no problem, but we include it over here as well for convenience). If we introduce for a $p$-vector $\mathbf{u}$, an accompanying vector $\tilde{\mathbf{u}}$ defined by $\tilde{\mathbf{u}}[j] = 0$ if $\FJb_{\pintb,k^*}[j,j]=0$ and $\tilde{\mathbf{u}}[j] = \mathbf{u}[j]$ otherwise, then Assumption~\ref{assm:DQM}(b) implies (i)
$
\LLRn_{\pintb,k^*}( a_{k^*,T}\Rate_T^{-1}\lpintb) =  \LLRn_{\pintb,k^*}( a_{k^*,T}\Rate_T^{-1}\tilde\lpintb),
$ (ii)  ${\mathbf{u}}^\prime \CSb_{k^*,T}=\tilde{\mathbf{u}}^\prime \CSb_{k^*,T} $ a.s., and (iii) ${\mathbf{u}}^\prime \QVb_{k^*,T} \mathbf{u} = \tilde {\mathbf{u}}^\prime \QVb_{k^*,T} \tilde{\mathbf{u}}$ a.s.
As the sequence $a_{k^*,T}(\pintb)\Rate_T^{-1}\tilde\lpintb$ is bounded (actually constant), we can apply Proposition~\ref{prop:likelihood_expansion} in combination with (i)--(iii) yielding
\[
\LLRn_{\pintb,k^*}( a_{k^*,T}\Rate_T^{-1}\lpintb) = 
\lpintb^\trans   \CSb_{k^*,T} - \frac{1}{2}   \lpintb^\trans\QVb_{k^*,T} \lpintb + \op(1).
\]

An obvious extension of Theorem~3.1 in \cite{melfipage2000} to $K>2$ arms yields, by Assumption~\ref{assm:sampling_scheme} and Slutsky's lemma,~(\ref{eqn:convFI}). A similar extension of their Theorem~3.2 yields, in combination with Assumption~\ref{assm:sampling_scheme} and Slutsky's lemma,~(\ref{eqn:convCS}).

In case of Assumption~\ref{assm:sampling_scheme}$(b^*)$, the LAN-property (\ref{eqn:LAN}) follows directly from the above. 
In case of Assumption~\ref{assm:sampling_scheme}$(b)$ we note that, for $k\neq k^*$, $a_{k,T}\Rate_T^{-1}\lpintb \to \bar{\lpintb}$ where $\bar{\lpintb}_j=\mfzero$ if $\FJb_{\pintb,k^*}[j,j]>0$ and $\bar{\lpintb}_j=\lpintb_j$ otherwise. Now the result follows.
\end{proof}

\section{Monte Carlo illustration}\label{sec:MC}


We consider a two-armed multi-armed bandit (MAB) setting ($K = 2$), where the potential outcomes for each arm $k = 1,2$ are generated as
\begin{align*}
\Zkt = \mu_k + \ekt,
\end{align*}
with innovations $\varepsilon_{k,t}$ that are i.i.d.\ Logistic with mean zero and scaled to have unit variance, independent across both $k$ and $t$. The parameter of interest is $\pintb = (\mu_1, \mu_2)$. We set $\mu_2 = 0$, $\mu_1 = m_1/\sqrt{T}$, and $T = 500$. All results are based on $50,000$ replications. 

Define the cumulative rewards $R_{k, t} = \sum_{s=1}^t\indicator_{\{A_{s} = k\}}\Y_{s} = \sum_{s=1}^t\indicator_{\{A_{s} = k\}}\Z_{k,s}$, for $k = 1,2$. We consider both algorithms mentioned in Remark~\ref{remark:convergent_algorithms}:
\begin{itemize}
    \item[-] \textbf{Gaussian Thompson sampling with prior $\mathcal{N}(0,1)$}: Conditional on the filtration $\mathcal{F}_t$, the posterior of $\mu_k$ is assumed to be $\mathcal{N}\left(R_{k,t}/(\freq_{k,t} + 1) , 1/(\freq_{k,t} + 1) \right)$, $k = 1,2$. The probability of choosing Arm-$2$ at round $t+1$ is $$\Phi\left(\left(\frac{R_{2,t}}{\freq_{2,t}+1} - \frac{R_{1,t}}{\freq_{1,t}+1}\right) \bigg/ \sqrt{\frac{1}{\freq_{1,t}+1}+\frac{1}{\freq_{2,t}+1}}\right).$$ 
    \item[-] \textbf{UCB1 sampling}: At round $t+1$, the algorithm selects the arm that maximizes the upper confidence bound $$\frac{R_{k,t}}{\freq_{k,t}} + \sqrt{\frac{2\log(t+1)}{\freq_{k,t}}}.$$
\end{itemize}
Note that we use \emph{Gaussian} Thompson sampling even if our reward distribution is Logistic. Such misspecification is allowed in the results of \cite{fan2022typical}. We use a Logistic reward distribution to prevent, in the simulations below, exact Gaussian distributions for the statistics of interest. After all, we want to study whether the limiting distributions provide good approximations to finite-sample distributions.

\begin{figure}[ht] 
\centering
\includegraphics[width = 6in]{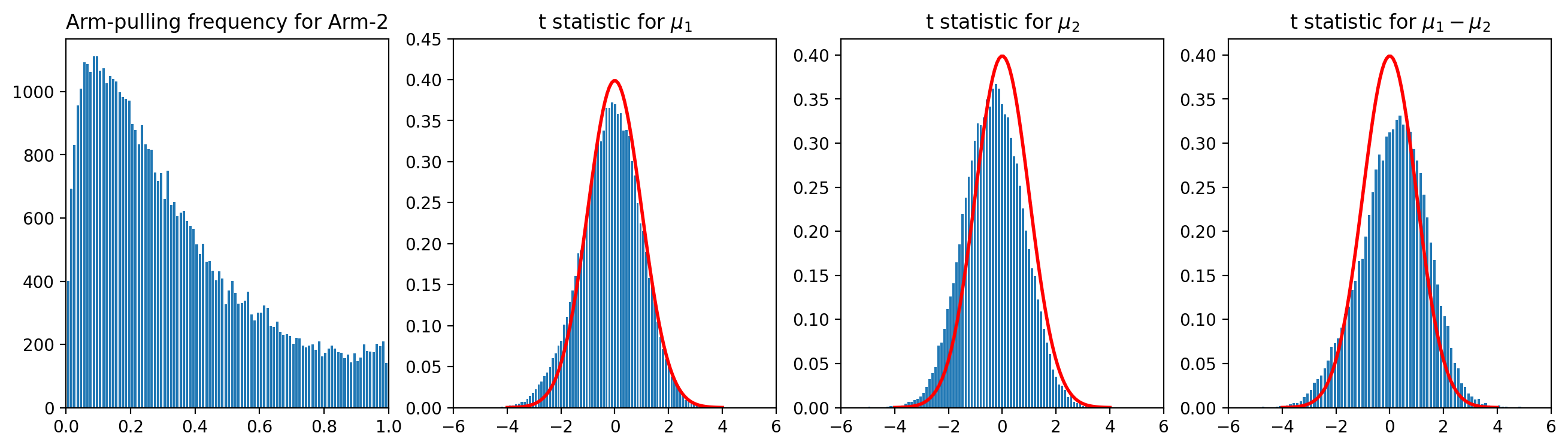}
\includegraphics[width = 6in]{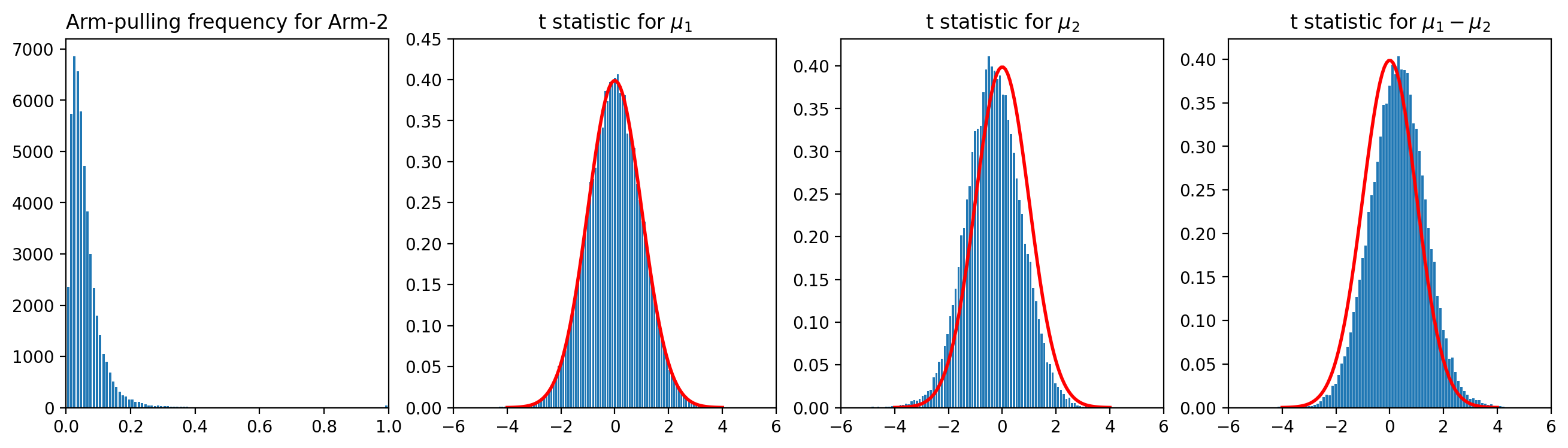}
\includegraphics[width = 6in]{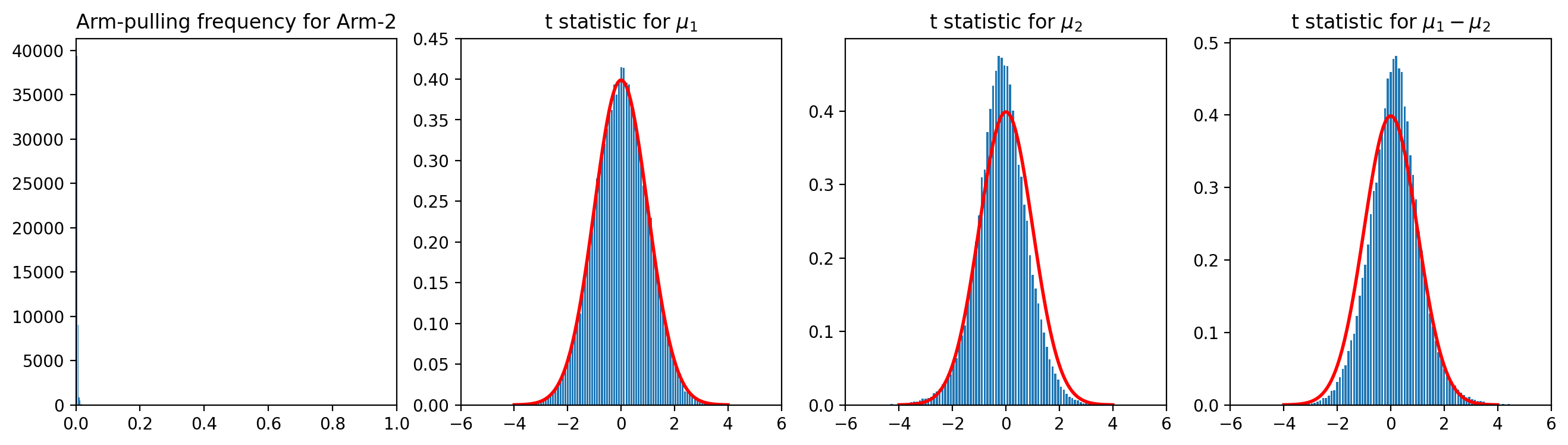}
\includegraphics[width = 6in]{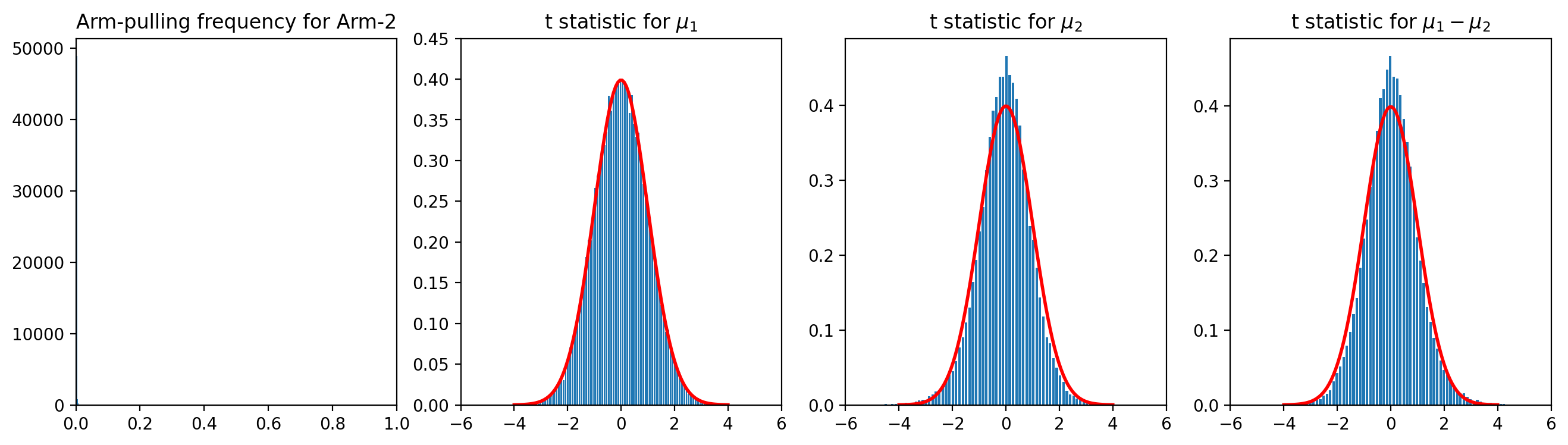}
\caption{{\small Histograms of the pulling frequency for the suboptimal Arm 2, followed by the $t$-statistics for $\mu_1$, $\mu_2$, and $\delta$ (from left to right), under \textit{Gaussian Thompson sampling}. The four panels from top to bottom correspond to four values of $m_1$: 2, 10, 50, and 75, respectively.}}
\label{fig:MABseq_statz_Thompson}
\end{figure}

\begin{figure}[ht] 
\centering
\includegraphics[width = 6in]{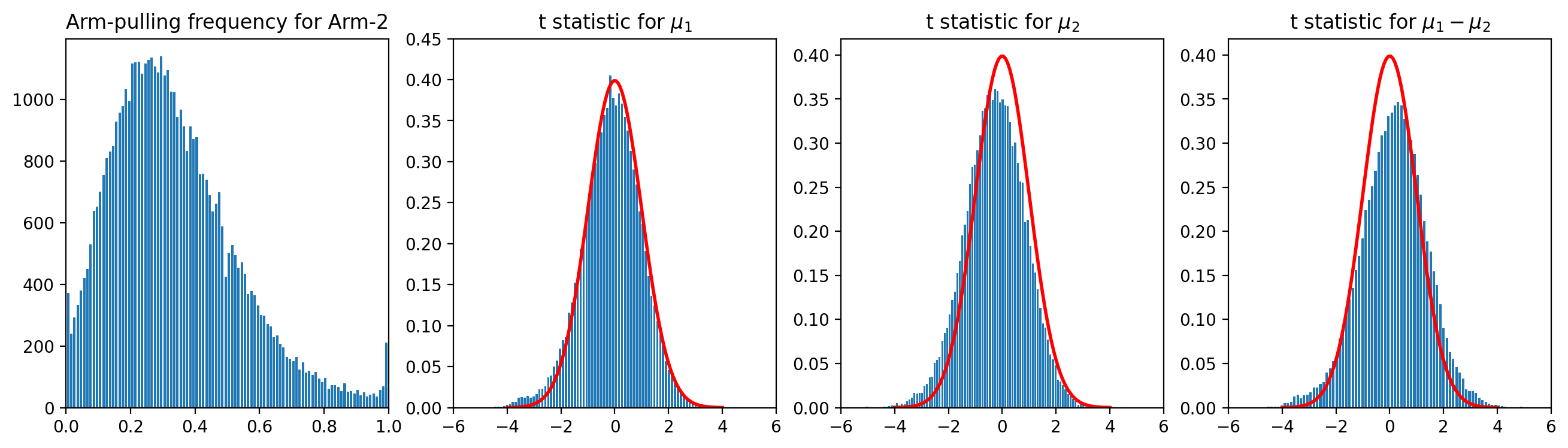}
\includegraphics[width = 6in]{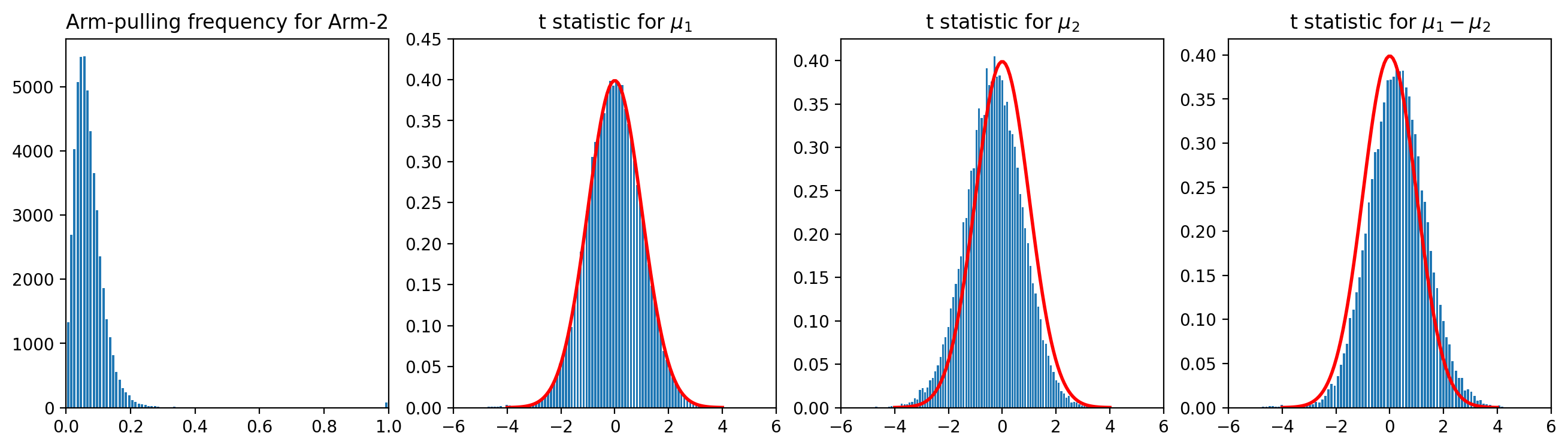}
\includegraphics[width = 6in]{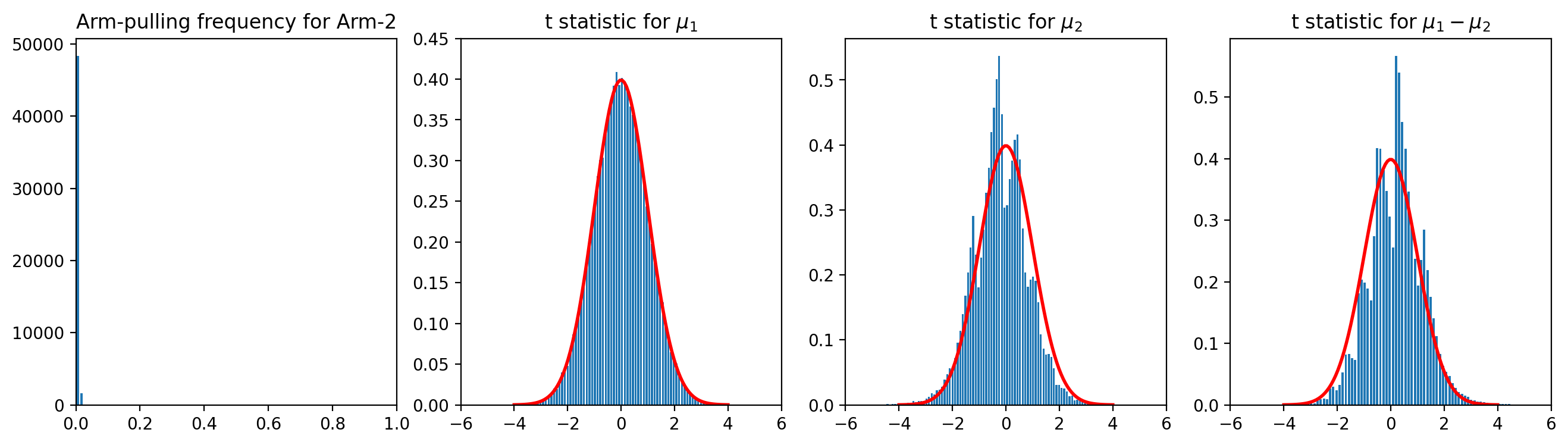}
\includegraphics[width = 6in]{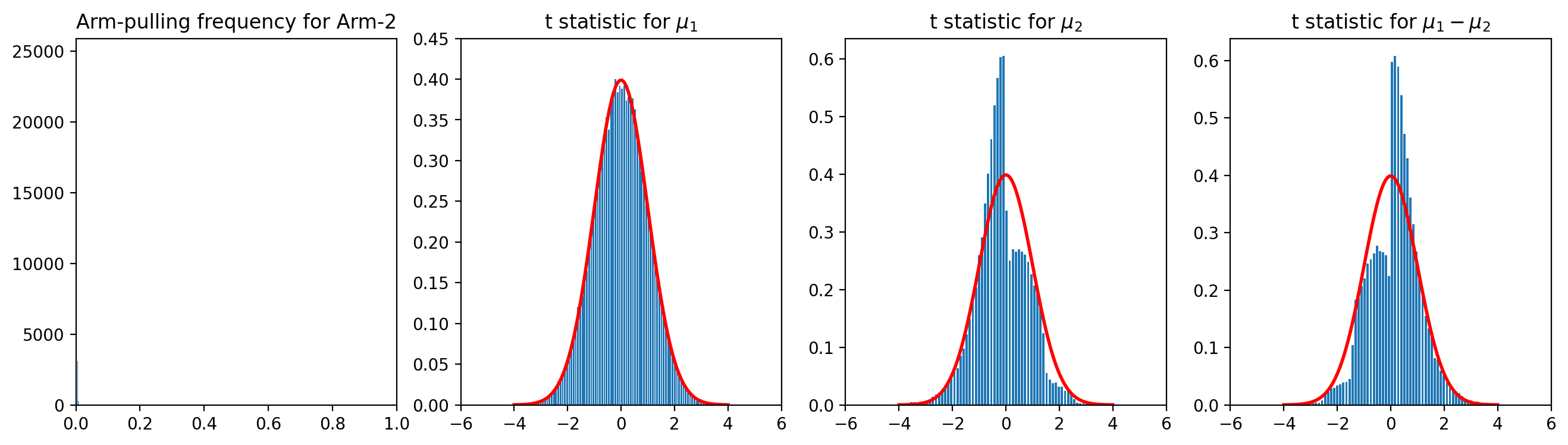}
\caption{{\small Histograms of the pulling frequency for the suboptimal Arm 2, followed by the $t$-statistics for $\mu_1$, $\mu_2$, and $\delta$ (from left to right), under \textit{UCB1 sampling}. The four panels from top to bottom correspond to four values of $m_1$: 2, 10, 50, and 75, respectively.}}
\label{fig:MABseq_statz_UCB1}
\end{figure}

In Figure~\ref{fig:MABseq_statz_Thompson}, we display, from left to right, the histograms of: (i) the arm-pulling frequency for Arm 2, $\freq_{2,T}$; (ii)–(iii) the classical Student’s $t$-statistics for $\mu_1$ and $\mu_2$, both defined as $$\stat^{\mu}_k \defeq \frac{ R_{k,T}/\freq_{k,T} - \mu_k}{\sqrt{1/\freq_{k,T}}},$$ and (iv) the $t$-statistic for the difference parameter $\delta \defeq \mu_1 - \mu_2$, given by $$\stat^{\delta} \defeq \frac{R_{1,T}/\freq_{1,T} - R_{2,T}/\freq_{2,T} - \delta}{\sqrt{1/\freq_{1,T} + 1/\freq_{2,T}}},$$ under Gaussian Thompson sampling. 

We experiment with four values of $m_1$: 2, 10, 50, and 75, shown from top to bottom. When the expected reward gap between the two arms is small ($m_1 = 2$, top panel), none of the $t$-statistics---those for $\mu_1$, $\mu_2$, or $\delta$---exhibits an approximate standard normal distribution. This non-standard asymptotic behavior can instead be characterized by the stochastic–differential–equation-based limit experiment developed under equal-arms asymptotics in \cite{zhou2025bandit}. When the gap becomes larger ($m_1 = 10$, second panel), the $t$-statistic for $\mu_1$ begins to approach a standard normal distribution, whereas those for $\mu_2$ and $\delta$ still clearly deviate from normality. This conclusion persists even when $m_1$ increases to 50 (third panel), where the gap is $\delta = 50/\sqrt{500} \approx 2.236$, and even to 75 (bottom panel)—a setting in which, in most replications, Arm 2 is pulled only once. In both cases, the $t$-statistics for $\mu_2$ and $\delta$ show no indication of converging toward normality.

Figure~\ref{fig:MABseq_statz_UCB1} serves as a counterpart of Figure~\ref{fig:MABseq_statz_Thompson} but under the UCB1 algorithm mentioned above. All conclusions continue to hold, except that the deviations from normality in the $t$-statistics for $\mu_2$ and $\delta$ become even more severe. These simulation results indicate that, although the limit experiment theoretically guarantees normality for the standard (test) statistics, the $\log(T)$ rate for the suboptimal arms is too slow for the theoretical limit to provide a reliable approximation in finite samples---even with a moderately large $T = 500$.

\bibliographystyle{asa}
\bibliography{references}


\end{document}